\documentclass[a4paper, english, 10pt]{amsart}

\usepackage{savesym} 
\usepackage[upright]{fourier}
\usepackage{comment}
\usepackage{enumerate}
\usepackage{graphicx}
\usepackage[
    backgroundcolor=gray!10,
    linecolor=gray!20,
    bordercolor=gray!20
]{todonotes}
\usepackage{amsfonts,amsmath,amstext,amsbsy,amssymb}
\usepackage{amsopn,amsthm,amscd,amsxtra}
\usepackage{latexsym,mathrsfs}

\usepackage{mathabx} 
\savesymbol{widering} 
\restoresymbol{widering}{MABwidering}
\newcommand{\carr}{\righttoleftarrow}

\usepackage{hyperref}
\RequirePackage[dvipsnames]{xcolor} 
\definecolor{halfgray}{gray}{0.55} 
\definecolor{webgreen}{rgb}{0,0.5,0}
\definecolor{webbrown}{rgb}{.6,0,0} \hypersetup{%
  colorlinks=true, linktocpage=true, pdfstartpage=3,
  pdfstartview=FitV,%
  breaklinks=true, pdfpagemode=UseNone, pageanchor=true,
  pdfpagemode=UseOutlines,%
  plainpages=false, bookmarksnumbered, bookmarksopen=true,
  bookmarksopenlevel=1,%
  hypertexnames=true,
  pdfhighlight=/O,
  urlcolor=webbrown, linkcolor=RoyalBlue,
  citecolor=webgreen, 
  pdftitle={},%
  pdfauthor={},%
  pdfsubject={2000 MAthematical Subject Classification: Primary:},%
  pdfkeywords={},%
  pdfcreator={pdfLaTeX},%
  pdfproducer={LaTeX with hyperref}%
}

\newcommand{\R}{\mathbb{R}}
\newcommand{\Z}{\mathbb{Z}}\newcommand{\Q}{\mathbb{Q}}
\newcommand{\T}{\mathbb{T}}
\newcommand{\A}{\mathbb{A}}

\newcommand{\eg}{e.g.\ } 

\newtheorem{maintheorem}{Theorem}

\newtheorem{theorem}{Theorem}[section]
\newtheorem{proposition}[theorem]{Proposition}
\newtheorem{lemma}[theorem]{Lemma}

\theoremstyle{definition}

\theoremstyle{remark}

\newcommand{\Homeo}[1]{\mathrm{Homeo}_{#1}}

\newcommand{\dd}{\:\mathrm{d}}

\DeclareMathOperator{\M}{\mathfrak{M}} 
 
\DeclareMathOperator{\SL}{SL} 
\DeclareMathOperator{\GL}{GL}

\newcommand*{\cl}[1]{\overline{#1}}

\DeclareMathOperator{\diam}{diam}
\DeclareMathOperator{\cc}{cc}

\DeclareMathOperator{\proj}{pr}
\newcommand{\pr}[1]{\proj_{#1}} 

\newcommand{\I}{\mathrm{I}}

\usepackage{tikz-cd}

\title[Irrational circle factors]{Bounded vertical deviations and irrational circle factors in Dehn Twist classes}

\author[H.Corrêa]{Heric Corrêa}
\address{IMPA.\@ Estrada Dona Castorina, 110. Rio de Janeiro, 22460--320, Brazil.}
\email{correaheric2@gmail.com}
\urladdr{\href{https://heric-correa.github.io/}{heric-correa.github.io}}

\thanks{The author was partially supported by CNPq.}
\subjclass[2020]{Primary: 37E30, 37E45; Secondary: 37B99, 37E10.}
\keywords{Dehn twist, irrational circle factor, bounded vertical deviations, circloids.}

\date{\today}

\begin{document}

\begin{abstract}
 We prove that any homeomorphism of the two-torus homotopic to a Dehn twist map with irrational bounded vertical deviations admits an irrational circle factor. This establishes a rigidity property previously studied by Kocsard, who proved this implication under the additional assumption of a small-wandering-domain hypothesis. We demonstrate that this hypothesis can be entirely dropped: the large-scale shear intrinsic to a Dehn twist naturally eliminates the general annular obstructions that cannot be removed in the homotopy class of the identity.
\end{abstract}

\maketitle

\section{Introduction and statement of the result}

For an orientation-preserving circle homeomorphism, Poincaré’s rotation number provide a single asymptotic speed to every orbit and is the basic invariant behind the semiconjugacy to a rigid rotation. For a homeomorphism of $\T^2$ homotopic to the identity, a lift may exhibit several asymptotic displacement vectors. The seminal work of Misiurewicz and Ziemian \cite{MZ89} organized these vectors into the so-called rotation set, a nonempty compact convex subset of $\R^2$ whose points can also be described through invariant measures. The geometry of this set is closely connected with periodic orbits, entropy, and the existence of simpler factors of the dynamics.

The appropriate rotation invariant changes when the homeomorphism $f\colon \T^2 \carr$ is
homotopic to a nontrivial Dehn twist. In this setting, Doeff developed a one-dimensional rotation theory and proved, among other results, that, if $\tilde f\colon \R^2 \carr$ is a lift of $f$, its vertical rotation interval (see precise definitions in Section \ref{sec:vertical_rotation_interval}) is a compact interval and that its rational elements are realized by periodic orbits with the corresponding vertical rotation number \cite{Doeff1997}.

We study a stronger form of rotational coherence. Let $\pr{2}\colon \R^2 \to \R$ denote the projection onto the second coordinate. We say that $\widetilde f$ has
\emph{$\varrho$-bounded vertical deviations} if some $C>0$ satisfies
\begin{equation}\label{eq:bvd-intro}
  \left|
    \pr{2}\bigl(\widetilde f^{n}(\tilde z)-\tilde z\bigr)-n\varrho
  \right|\leq C
\end{equation}
for every $\tilde z \in \R^2$ and $n \in \Z$. This is stronger than requiring the vertical rotation interval to be equal $\{\varrho\}$, since it uniformly controls the error from the linear drift. Indeed, consider $z = \tilde z + \Z$. For the cocycle
\[\Delta_2\bigl(z\bigr)
  :=\pr{2}\bigl(\widetilde f(\tilde z)-\tilde z\bigr),\]
condition \eqref{eq:bvd-intro} is the boundedness of the Birkhoff sums of $\Delta_2-\varrho$.  If $f$ is minimal, the classical Gottschalk-Hedlund theorem \cite{GH55} gives $\Delta_2-\varrho=u-u\circ f$ for a continuous $u$, and
$h(z):=\pr{2}(\tilde z)+u(z) + \Z$ is a factor map to the irrational rotation $R_{\varrho}$. Without minimality, continuity of such a coboundary function is the essential obstacle.

More generally, an \emph{irrational circle factor} of $f$ is a continuous surjection $h\colon\T^2\to\T$ satisfying $h\circ f=R_{\varrho}\circ h$ for some $\varrho\in\R\setminus\mathbb Q$. Such a factor extracts a rigid component of the dynamics.  Its fibers form an invariant decomposition related to the pseudo-foliations of Kocsard and Pereira-Rodrigues \cite{KPR18}. Existence in a rational homological direction implies bounded deviations there \cite[Lemma~3.1]{JagerTal2017}; for a nontrivial Dehn twist, the relevant direction is vertical \cite[proof of Corollary~1.2]{Kocsard2021}.

Previous converses required additional recurrence.  J\"ager treated conservative pseudo-rotations with bounded mean motion \cite{Jager2009}, and J\"ager and Tal characterized irrational circle factors for conservative, non-eventually annular torus homeomorphisms \cite{JagerTal2017}.  Kocsard obtained the Dehn twist conclusion under the small-wandering-domain hypothesis \cite[Corollary~1.2]{Kocsard2021}.  Examples in the identity homotopy class show that such a hypothesis cannot be removed in general \cite[Section~3]{Kocsard2021}.  Our result shows that the intrinsic shear of a nontrivial Dehn twist removes this obstruction.

\begin{maintheorem}\label{thm:main}
Let $f\colon\T^2\carr$ be a homeomorphism homotopic to a nontrivial Dehn twist, and let $\widetilde f\colon\R^2\carr$ be a lift.  Assume that there exist $\varrho\in\R\setminus\mathbb Q$ and $C>0$ for which \eqref{eq:bvd-intro} holds.  Then $f$ admits an irrational circle factor with rotation number $\varrho$.
\end{maintheorem}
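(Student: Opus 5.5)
Write the Dehn twist as $(x,y)\mapsto(x+ky,y)$ with $k\neq 0$, up to conjugacy by a linear automorphism of $\T^2$, so that $\widetilde f(x+1,y)=\widetilde f(x,y)+(1,0)$ and $\widetilde f(x,y+1)=\widetilde f(x,y)+(k,1)$. The plan follows the Gottschalk--Hedlund strategy without assuming minimality.

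First I define the candidate coboundary through an upper envelope. Set
\[
\tilde u^+(\tilde z)=\sup_{n\in\Z}\bigl(\pr{2}(\widetilde f^{n}(\tilde z)-\tilde z)-n\varrho\bigr).
\]
By \eqref{eq:bvd-intro} this is finite and bounded by $C$, and by the periodicity relations above it is $\Z^2$-periodic, so it descends to a function $u^+$ on $\T^2$. It satisfies $u^+\circ f=u^+-(\Delta_2-\varrho)$ exactly. It is lower semicontinuous, being a supremum of continuous functions. The map $h^+(z)=\pr{2}(\tilde z)+u^+(z)+\Z$ therefore satisfies $h^+\circ f=R_\varrho\circ h^+$, and the whole problem reduces to showing that $u^+$, or a suitable modification of it, is continuous.

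Second, I use the function $H(\tilde z)=\pr{2}(\tilde z)+\tilde u^+(\tilde z)$ on $\R^2$ to build an invariant family of sets. The sets $\{H\le t\}$ are closed, and the equivariance $H\circ\widetilde f=H+\varrho$ holds. Since $H$ is $1$-periodic in $x$ and $|H-\pr{2}|\le C$, the set $\{H\le t\}$ contains $\R\times(-\infty,t-C]$ and is contained in $\R\times(-\infty,t+C]$. Consider the boundaries $\Gamma_t=\partial\{H\le t\}$ and the essential annular continua (circloids) between the upper and lower envelopes $H^\pm$. These form an ordered family of horizontally essential invariant-up-to-translation sets $\Gamma_t$, with $\widetilde f(\Gamma_t)=\Gamma_{t+\varrho}$. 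A discontinuity of $u^+$ corresponds to a jump of this family: a nonempty open region $U_t=\operatorname{int}\bigl(\{H\le t\}\setminus\bigcup_{s<t}\{H\le s\}\bigr)$, or equivalently a set where $H^+\ne H^-$ has interior. The continuity of $h^+$ then follows, by a standard argument as in Kocsard and J\"ager--Tal, once the circloids $C_t$ are shown to have empty interior gaps, that is, once these annular ``thick'' regions are excluded.

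Third comes the key step, which I expect to be the main obstacle: showing that these gaps cannot exist, using the shear in place of the small-wandering-domain hypothesis. Irrationality of $\varrho$ and the equivariance imply that the open sets $U_t$, projected to $\T^2$, are wandering, or at least that their iterates $\widetilde f^n(U_t)=U_{t+n\varrho}$ are pairwise disjoint in the annulus $\R^2/\langle(1,0)\rangle$. Each $U_t$ is horizontally essential, or contains a horizontally essential component, because it is a difference of nested $x$-periodic sets. The Dehn twist relation $\widetilde f(x,y+1)=\widetilde f(x,y)+(k,1)$ forces horizontal displacement to grow linearly in $y$. I would consider an essential horizontal curve $\gamma$ inside $U_t$ together with its translate by $(0,1)$, which lies in $U_{t+1}$, and iterate both. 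After $n$ iterates their horizontal positions differ by $kn$, while vertically both remain within $C$ of the lines at heights $t+n\varrho$ and $t+1+n\varrho$. Since the $U_s$ are disjoint and ordered, the region between them must contain the intermediate sets $U_{t+m\varrho}$ for every $m$ with $\{m\varrho\}\in(0,1)$. Using the twist, I would show that a point of a thick region must eventually cross a whole fundamental domain horizontally relative to another thick region, contradicting disjointness and ordering. A cleaner route may be an area or diameter argument: the sets $U_{t+m\varrho}$, $m\in\Z$, are disjoint and essential in a strip of bounded height, which forces their vertical widths to be summable. On the other hand, the twist forces $\widetilde f^{n}$ to stretch any open set horizontally, with a uniform inner-radius lower bound inherited from the fact that the image of an essential open annulus stays essential. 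This yields a contradiction. Once the gaps are excluded, $h^+$ is continuous, and it is surjective because of the bound $|h^+-\pr{2}|\le C$, which completes the proof.
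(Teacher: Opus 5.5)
There is a genuine gap. Your Step 1 is fine: $H$ is an equivariant, lower semicontinuous function. The problem is that Step 2 misidentifies the obstruction, so Step 3 sets out to prove a false statement.

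\textbf{Thick regions are not the obstruction.} Since $\{H\le t\}\setminus\bigcup_{s<t}\{H\le s\}=\{H=t\}$, your claim is that discontinuity corresponds to some level set $\{H=t\}$ having interior, and you then try to use the twist to exclude such thick regions. But thick regions coexist with irrational circle factors in the Dehn twist class. Let $g$ be a Denjoy homeomorphism of $\T$ with rotation number $\varrho$, lift $\tilde g$ and minimal Cantor set $K$. Let $\tilde\phi$ be the lift of its semiconjugacy, so $\tilde\phi\circ\tilde g=\tilde\phi+\varrho$, and put $f(x,y)=(x+ky,g(y))$. Then:
\begin{itemize}
\item the lift $\widetilde f(\tilde x,\tilde y)=(\tilde x+k\tilde y,\tilde g(\tilde y))$ lies in the class of $I_k$ and has $\varrho$-bounded vertical deviations with $C=1$;
\item for every wandering interval $I$ of $g$, the annulus $\T\times I$ is an essential wandering open set;
\item $h(x,y)=\phi(y)$ is an irrational circle factor.
\end{itemize}
With $\psi=\mathrm{id}-\tilde\phi$ one has $\tilde g^{n}(\tilde y)-n\varrho=\psi(\tilde g^{n}(\tilde y))+\tilde\phi(\tilde y)$. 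Since $\psi$ is increasing on each gap and gap endpoints have dense orbits in $K$, this gives $H(\tilde x,\tilde y)=\tilde\phi(\tilde y)+\max_{\pi^{-1}(K)}\psi$. This $H$ is continuous, yet constant on every $\R\times\tilde I$.

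So thick regions cannot be ruled out, and your width and stretching heuristic cannot close. Here $f$ maps $\T\times g^{n}(I)$ onto $\T\times g^{n+1}(I)$, the widths are summable, and the twist only shears each annulus within itself. Two further steps also fail:
\begin{itemize}
\item Comparing $\gamma$ with $\gamma+(0,1)$ gives nothing, because the lift of an essential curve is $T_{(1,0)}$-invariant, so a relative horizontal shift by $kn$ is invisible.
\item A difference of nested $x$-periodic sets need not contain any essential component.
\end{itemize}

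\textbf{Excluding interior would not give continuity either.} The real obstruction is that two distinct members $\mathcal C_r\ne\mathcal C_s$, $r<s$, of J\"ager's ordered circloid family might intersect. This can happen along a set with empty interior (pinching). Continuity of the function defined by the circloid order is equivalent to the family being pairwise disjoint.

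\textbf{The missing idea} is how the twist rules out pinching rather than thickness. The paper assumes $\mathcal C_{r_0}\cap\mathcal C_{s_0}\ne\emptyset$ and argues as follows.
\begin{enumerate}
\item Take a near return $\delta=q\varrho-m\ne0$. Join lifts $\tilde z,\tilde w$ of $\hat z\in\mathcal C_t$ and $\hat w=\widehat T_1^{-m}\widehat f^{q}(\hat z)\in\mathcal C_{t+\delta}$ by a continuum $Z$ inside the lifted bunch.
\item Use the Dehn twist to get $|\pr{1}(\widetilde f^{n}(\tilde w)-\widetilde f^{n}(\tilde z))-nk\delta|\le K$ for all $n\ge0$.
\item The renormalized images $T_{(0,-m_j)}\widetilde f^{n_j}(Z)$ stay in a fixed strongly contained annular continuum while their diameters blow up.
\item Passeggi--Sambarino copy counting then forces unboundedly many horizontal translates of a component of $\widetilde{\mathcal C}_{r_0}\cap\widetilde{\mathcal C}_{s_0}$ inside these images.
\item But commutation with $T_{(1,0)}$ bounds that number by $\lfloor\diam(\pr{1}Z)\rfloor+1$, a contradiction.
\end{enumerate}
Your intuition that the twist converts the vertical error into linear horizontal stretching is the right ingredient. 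It has to be applied to a continuum joining two distinct circloids inside a pinched bunch, combined with this topological rigidity, not to open wandering regions.
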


The proof starts with J\"ager's ordered equivariant family $\{\mathcal C_r\}_{r\in\R}$ of circloids in the horizontal annulus \cite{Jager2009}. The main step is to show that distinct members are disjoint.  If two intersect, they bound a bunch to which the planar copy-counting results of Passeggi and Sambarino apply \cite[Propositions~2.1 and~2.4]{PasseggiSambarino2020}.  Choose an irrational near return $q\varrho-m=\delta\neq0$.  For suitable lifts $z,w$, the Dehn twist turns the vertical error $\delta$ into horizontal growth:
\[
  \left|
    \pr{1}\bigl(\widetilde f^{n}(w)-\widetilde f^{n}(z)\bigr)
    -nk\delta
  \right|\leq K
  \quad \text{for every} \ n \geq 0,
\]
where $k\neq0$ is the twist coefficient.  Normalized returns keep the growing continua inside a fixed bunch.  Copy-counting then forces arbitrarily many horizontal deck translates, whereas pulling them back to the fixed initial continuum gives a uniform bound.  This contradiction proves disjointness, and the order of the circloids defines the factor map to $R_{\varrho}$.

This paper is organized as follows. In Section~\ref{sec:pre-notations} we fix the notation and recall the vertical rotation interval, annular continua, circloids, and their order. In Section~\ref{sec:ordered_circloid_family} we construct J\"ager's ordered equivariant family of circloids and explain how pairwise disjointness yields an irrational circle factor. Section~\ref{sec:proof_main_thm} contains the proof of pairwise disjointness: we first recall the topological copy-counting results of Passeggi and Sambarino, then establish the shear-growth estimate specific to the Dehn twist class, and finally combine these ingredients to prove Theorem \ref{thm:main}.

\section{Preliminaries and notations}\label{sec:pre-notations}

\subsection{Basic definitions} Let $X$ be a metric space, and denote by $\Homeo{}(X)$ the group of its homeomorphisms. For a given $f \in \Homeo{}(X)$, we consider its iterates $f^n$ for $n \in \Z$. Given $x \in X$, its \emph{orbit} under $f$ is the set $\{f^n(x) : x \in \Z\}$. A point $x \in X$ is \emph{fixed} under $f$ if its orbit is equal to $\{x\}$, and \emph{periodic} if its orbit is finite; we say that $f \in \Homeo{}(X)$ is \emph{periodic-point-free} if there are no periodic points. Furthermore, $f$ is said to be \emph{minimal} if the orbit of any point is dense in $X$.

Given a subset $Y \subset X$, we denote by $\cc(Y)$ the collection of connected components of $Y$ in $X$. If $X$ is a compact metric space, we denote by $C^0(X;\R)$ the space of continuous functions $\psi \colon X \to \R$ endowed with the usual $\sup$ norm $\|\psi\|_{\infty} := \sup\{|\psi(x)| : x \in X\}$.

In this paper, we adopt the following convention: a point in $\R$ will be denoted using the ``tilde notation'': $\tilde x \in \R$; and a point in $\T = \R /\Z$ is denoted without the tilde: $x \in \T$. Given points $(\tilde x, \tilde y) \in \R^2$, $(x,\tilde y) \in \T \times \R$ and integer numbers $m, \ell \in \Z$; we denote the following translations \[T_{(m,\ell)}(\tilde x, \tilde y) := (\tilde x + m, \tilde y+\ell), \qquad \widehat T_1(x,\tilde y) := (x,\tilde y + 1).\]

For any Cartesian product $X \times Y$ between metric spaces, we denote by $\pr{1}$ and $\pr{2}$ the respective projections onto the first and second coordinate.

\subsection{Periodic-point-free homeomorphisms of the torus}  Let $\T^{d} := \R^{d}/\Z^{d}$ be the $d$-dimensional torus ($d \geq 1$), and let $\A := \T \times \R$ be the open $2$-dimensional annulus. Let $\pi_{\T^d}\colon\R^{d}\to\T^{d}$ and $\pi_{\A}\colon \R^2 \to \A$ denote their respective natural quotient projections. Whenever there is no risk of ambiguity, we will simply denote both $\pi_{\T^d}$ and $\pi_{\A}$ by $\pi$. We also define the covering map $\hat{\pi}\colon \A \to \T^2$ by $\hat{\pi} = \mathrm{id}\times \pi_{\T}$.

Let $\Homeo{+}(\T^d)$ be the group of orientation-preserving homeomorphisms of $\T^d$. It is well known that every $A\in\GL_{d}(\Z)$ is the lift of a Lie group automorphism of $\T^d$, which, by an abuse of notation, we also denote by $A\colon \T^{d}\carr$. Conversely, for any homeomorphism $f\in\Homeo{}(\T^{d})$, there exists a unique $A_f\in\GL_{d}(\Z)$ such that $f$ is homotopic to the corresponding Lie group automorphism $A_f$. In this case, one can easily show that for any lift $\tilde f\colon\R^{d}\carr$ of $f$, the difference $\tilde f-A_f\colon\R^{d}\to\R^{d}$ is $\Z^{d}$-periodic. Consequently, it can be viewed as an element of $C^{0}(\T^{d},\R^{d})$.

 As we aim to find irrational rotation factors for $f$, we can restrict ourselves to the set of periodic-point-free homeomorphisms of $\T^2$. For that, as a straightforward consequence of the Lefschetz fixed-point theorem, the class $\Homeo{A}(\T^2)$ contains periodic-point-free homeomorphisms if and only if $1$ is the only eigenvalue of $A$. By the classical Jordan decomposition theorem (see \eg \cite[Proposition 2.4]{Kocsard2021} for the calculations), it follows that for any periodic-point-free homeomorphism $f\in\Homeo{+}(\T^{2})$, there exist $B\in\SL_{2}(\Z)$ and a unique $k\in\Z$ such that
\begin{displaymath}
     BA_{f}B^{-1}=I_{k}:=
  \begin{pmatrix}
    1 & k\\
    0 & 1
  \end{pmatrix}.
\end{displaymath}
Consequently, $B\circ f\circ B^{-1}\in \Homeo{k}(\T^{2})$, where, for simplicity, we write $\Homeo{k}(\T^{2})$ to denote $\Homeo{I_{k}}(\T^{2})$. Therefore, when studying the dynamics of periodic-point-free homeomorphisms on $\T^{2}$, there is no loss of generality in restricting our attention to homeomorphisms in $\Homeo{k}(\T^{2})$ for some $k\in\Z$. The toral automorphism associated to \(\I_{k}\) is called a \emph{Dehn twist} and its homotopy class is named accordingly.

We also denote by $\Homeo{0}(\A)$ the group of homeomorphism of the annulus homotopic to the identity. Given $f \in \Homeo{k}(\T^2)$, any lift $\tilde f\colon \R^2 \carr$ induces a unique $\hat f \in \Homeo{0}(\A)$ which is a lift of $f$ with respect to $\hat \pi$. Finally, since $\tilde f \circ T_{(0,1)} = T_{(k,1)}\circ \tilde f$, we note that  $\hat f \circ \hat{T}_1 = \hat T_1 \circ \hat f$.

\subsection{Vertical rotation interval}\label{sec:vertical_rotation_interval} Given $f \in \Homeo{k}(\T^2), k\neq 0$, and a lift \(\tilde{f} \colon \R^2 \carr\) of \(f\), its \emph{displacement map} is $\Delta_{\tilde f} := \tilde{f} - \I_k$. More generally, for any $n \in \Z$, the iterate $\tilde f^n$ belongs to $\Homeo{kn}(\T^2)$ and we denote its displacement map by \[\Delta_{\tilde f}^{(n)} := \Delta_{\tilde f^n} = \tilde f^n - I_{kn}.\]
We write the coordinate functions of $\Delta_{\tilde f}^{(n)}$ by $\Delta_i^{(n)}\colon \R^2 \to \R$ where \[\Delta_i^{(n)} = \pr{i}\circ \Delta^{(n)} \quad \text{for each}\ i \in \{1,2\}.\]

For the coordinate functions, we are omitting the dependence of the lift $\tilde f$ just for the simplicity of the notation. Moreover, since $\Delta_{\tilde f}^{(n)}$ is $\Z^2$-periodic, we can consider $\Delta_{i}^{(n)} \in C^0(\T^2; \R)$ for any $i \in \{1,2\}$. 

A simple induction argument shows that
\begin{equation}\label{eq:cocycle_properties}
\Delta_2^{(n)} = \sum_{j=0}^{n-1} \Delta_2\circ f^j.
\end{equation}
This cocycle relation does not hold for the first coordinate function $\Delta_1^{(n)}$, which is the primary reason we cannot define a two-dimensional rotation set analogous to the case of homeomorphisms homotopic to the identity as done in the seminal work \cite{MZ89} of Misiurewicz and Ziemian.

By Birkhoff's ergodic theorem, it is thus natural to define the following set, called the \emph{vertical rotation set} of $\tilde f$:
\begin{equation}
  \label{eq:vertical-rotation-set-def}
  \varrho_V(\tilde{f}) := \left\{ \int_{\T^{2}}\Delta_2(z) \, \dd\mu (z) \, \colon \, \mu \in \M(f) \right\} \text{,}
\end{equation}
where $\M(f)$ stands by the set of $f$-invariant measures endowed with the weak* topology. The set \eqref{eq:vertical-rotation-set-def} is the image of a compact, convex and nonempty set under a linear and continuous map, being therefore a nonempty and compact interval of the real line. A topological way to characterize it is as the following limit set:
\begin{equation}
  \label{eq:vertical-rotation-set-equiv}
  \varrho_V(\tilde{f}) = \bigcap_{k \geq 0} \cl{ \bigcup_{n \geq k} \left\{ \frac{\Delta_2^{(n)} (z) }{n} \, \colon \, z \in \T^2 \right\} } \text{.}
\end{equation}
This equality may be established in a way analogous to that of \cite[Corollary 3.5]{MZ89}. 

\subsection{Circloids}

An \emph{annular continuum} $A\subset\A$ is a compact connected set whose complement has exactly two unbounded components.  They are denoted by $\mathcal U^-(A)$ and $\mathcal U^+(A)$, according as they contain the lower and upper end.  A \emph{circloid} is an annular continuum minimal for inclusion among annular continua.  

For annular continua $A,B$, write
\begin{equation}\label{eq:order}
 A\preceq B\quad\Longleftrightarrow\quad B\subset A\cup\mathcal{U}^+(A),
\end{equation}
or equivalently $A\subset B\cup\mathcal{U}^-(B)$. If $A\preceq B$, put
\[
 [A,B]=\A\setminus\bigl(\mathcal{U}^-(A)\cup\mathcal{U}^+(B)\bigr).
\]
The order is transitive, is preserved by end-preserving annulus homeomorphisms, and is antisymmetric on circloids. If $A\preceq B$, then $[A,B]$ is an annular continuum. A circloid $C$ satisfying $A\preceq C\preceq B$ is contained in $[A,B]$. Finally, two distinct comparable circloids are disjoint exactly when their order is strict. These standard facts follow from separation in the two-point compactification of the annulus; see \cite[Section 3]{Jager2009} and \cite[Section 2]{PasseggiSambarino2020}.

\section{The ordered circloid family}
\label{sec:ordered_circloid_family}

In this section, for the sake of completeness, we explain and summarize the well-known construction of the circloids due to Jäger \cite{Jager2009}. Henceforth, we assume that $f \in \Homeo{k}(\T^2)$, $k \neq 0$, has $\varrho$-bounded vertical deviation for some $\varrho \in \R\setminus \Q$. 

For each $r\in\R$, we consider the set $\mathcal{A}_r \subset \A$ given by 
\begin{equation}\label{eq:Ar}
 \mathcal A_r =
 \bigcup_{n\in\Z}\widehat f^{\ n}
 \bigl( \T\times\{r-n\varrho\}\bigr).
\end{equation}
The deviation estimate \eqref{eq:bvd-intro} gives $\mathcal A_r\subset \T\times[r-C,r+C]$ for every $r \in \R$. Also $\T \times \{r\} \subset\mathcal A_r$, so $\mathcal A_r$ is essential and is a lower generating set in the terminology of J\"ager~\cite[Section 3]{Jager2009}. For completeness, if $L$ is a lower generating set, let $\mathcal U(L)$ be the upper component of $\A\setminus L$.  J\"ager's upper circloid hull is
\begin{equation}\label{eq:hull}
 \mathcal C^+(L)=
 \A\setminus
 \bigl(\mathcal L\mathcal U(L)\cup
       \mathcal U\mathcal L\mathcal U(L)\bigr),
\end{equation}
where $\mathcal L$ and $\mathcal U$ denote successive lower and upper complementary-component operations. From \cite[Lemma 3.2]{Jager2009}, the set $\mathcal C^+(L)$ is a circloid.  Define
\begin{equation}\label{eq:Cr}
 \mathcal C_r=\mathcal C^+(\mathcal A_r).
\end{equation}

The following proposition is the construction in J\"ager~\cite[equations (4.1)--(4.6)]{Jager2009}:

\begin{proposition}\label{prop:jager-family}
There exists $\tau>0$ such that, for all $r,s\in\R$,
\begin{align}
 \mathcal C_r&\subset\T\times[r-\tau,r+\tau],
                                                        \label{eq:localization}\\
 \mathcal C_{r+1}&=\widehat T_1(\mathcal C_r),                    \label{eq:integer-equiv}\\
 \mathcal C_{r+\varrho} &=\widehat f(\mathcal C_r),       \label{eq:f-equiv}\\
 r<s&\Longrightarrow \mathcal C_r\preceq\mathcal C_s.\label{eq:weak-order}
\end{align}
\end{proposition}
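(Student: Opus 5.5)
We verify the four properties directly from the definition \eqref{eq:Ar} of $\mathcal A_r$ and the naturality of J\"ager's hull \eqref{eq:hull}. The plan is to first establish the corresponding identities at the level of the generating sets $\mathcal A_r$, and then push them through $\mathcal C^+$.

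\textbf{Equivariance of the generating sets.} Since $\widehat T_1$ commutes with $\widehat f$ and $\widehat T_1(\T\times\{r-n\varrho\})=\T\times\{r+1-n\varrho\}$, we get $\widehat T_1(\mathcal A_r)=\mathcal A_{r+1}$. Similarly,
\[
\widehat f(\mathcal A_r)=\bigcup_{n}\widehat f^{\,n+1}\bigl(\T\times\{r+\varrho-(n+1)\varrho\}\bigr)=\mathcal A_{r+\varrho}.
\]
Both $\widehat T_1$ and $\widehat f$ are end-preserving homeomorphisms of $\A$ homotopic to the identity. Hence they carry upper and lower complementary components to upper and lower complementary components, and therefore commute with the operators $\mathcal U$ and $\mathcal L$. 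It follows that $h(\mathcal C^+(L))=\mathcal C^+(h(L))$ for such $h$, which gives \eqref{eq:integer-equiv} and \eqref{eq:f-equiv}.

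\textbf{Localization.} We have $\mathcal A_r\subset\T\times[r-C,r+C]$, and this set is connected modulo the essential circle $\T\times\{r\}$. Consequently $\mathcal U(\mathcal A_r)\supset\T\times(r+C,\infty)$ and $\mathcal L\mathcal U(\mathcal A_r)\supset\T\times(-\infty,r-C)$. The hull lies in the complement of these two sets, so it lies in $\T\times[r-C,r+C]$, and we may take $\tau=C$. The one point to check is that $\mathcal C^+$ only removes components, so it stays inside the region between the bounding circles. This follows because $\mathcal{UL U}(L)$ contains the unbounded upper part.

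\textbf{Weak order.} This is the step I expect to be the main obstacle. For $r<s$, the sets $\mathcal A_r$ and $\mathcal A_s$ need not be nested, so monotonicity of the hull cannot be used directly. I would use a monotonicity property of J\"ager's construction: if every point of $L'$ lies in $L\cup\mathcal U(L)$, then $\mathcal C^+(L)\preceq\mathcal C^+(L')$. Since $\mathcal A_r$ need not lie below $\mathcal A_s$ when $s-r<2C$, I would instead follow J\"ager's route.

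First, prove the claim for $s-r>2\tau$ using localization, since then $\mathcal C_s\subset\mathcal U^+(\mathcal C_r)$.

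Next, handle general $r<s$ by a density argument. The set $\{m\varrho+\ell\}$ is dense because $\varrho$ is irrational. Using \eqref{eq:integer-equiv} and \eqref{eq:f-equiv}, $\mathcal C_{r+m\varrho+\ell}=\widehat T_1^{\ell}\widehat f^{m}(\mathcal C_r)$. I would show that a circloid $\mathcal C$ and its image $\widehat T_1^{\ell}\widehat f^m(\mathcal C)$ with $m\varrho+\ell>0$ are always comparable. Comparability would come from the fact that two circloids which are not comparable must cross, together with a rotation-number argument showing that crossing is incompatible with the strict vertical drift. This yields a well-defined order along the dense set, consistent with the order of the indices.

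Finally, I would extend the order to arbitrary parameters by applying J\"ager's lemma that the map $L\mapsto\mathcal C^+(L)$ is monotone along the family $\mathcal A_r$ on the upper side. If this is unavailable, I would cite \cite[(4.1)--(4.6)]{Jager2009} directly for \eqref{eq:weak-order}, since the construction there is exactly the one given in \eqref{eq:Ar} and \eqref{eq:Cr}.
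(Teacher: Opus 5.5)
\textbf{There is a genuine gap in your argument for the weak order \eqref{eq:weak-order}.} Your verification of \eqref{eq:localization}, \eqref{eq:integer-equiv} and \eqref{eq:f-equiv} is sound: the hull is natural under the end-preserving maps $\widehat T_1,\widehat f$, and $\tau=C$ works. The paper gives no argument for this proposition and simply cites J\"ager's (4.1)--(4.6), so your fallback citation coincides with it. The argument you actually sketch for \eqref{eq:weak-order}, however, does not work.

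In your second step, comparability of $\mathcal C$ and $g(\mathcal C)$, with $g=\widehat T_1^{\ell}\widehat f^{\,m}$, does not follow from drift. When $0<m\varrho+\ell<2C$, the map $g$ can move points downward. The fact that $g^{n}(\mathcal C)$ eventually lies above $\mathcal C$ is compatible with $g(\mathcal C)$ crossing $\mathcal C$, and for an arbitrary essential curve in place of $\mathcal C$ this can happen. Any proof must use the specific structure of $\mathcal C_r$, which you do not. Your third step then appeals to exactly the monotonicity being proved. Without it, the order on the dense set $r+\Z+\varrho\Z$ does not pass to arbitrary $s$, since no continuity of $r\mapsto\mathcal C_r$ is available.

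\textbf{The missing idea.} Your premise is mistaken: it does not matter that the sets $\mathcal A_r$ are not nested. $\mathcal C^+(L)$ depends on $L$ only through $\mathcal U(L)$, and the sets $\mathcal U(\mathcal A_r)$ are nested. To see this, let $r<s$ and $\gamma^r_n=\widehat f^{\,n}(\T\times\{r-n\varrho\})$.
\begin{itemize}
\item Since $\widehat f^{\,n}$ preserves ends, it maps $\T\times[s-n\varrho,\infty)\subset\T\times(r-n\varrho,\infty)$ onto the closed region above $\gamma^s_n$. That region therefore lies in the open region above $\gamma^r_n$.
\item The connected set $\mathcal U(\mathcal A_s)$ misses every $\gamma^s_n$ and contains $\T\times(s+C,\infty)$. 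So it lies above every $\gamma^s_n$, hence above every $\gamma^r_n$. Thus it misses $\mathcal A_r$, and $\mathcal U(\mathcal A_s)\subset\mathcal U(\mathcal A_r)$.
\item Passing to lower complementary components reverses this inclusion: $\mathcal L\mathcal U(\mathcal A_r)\subset\mathcal L\mathcal U(\mathcal A_s)$.
\item Since $\mathcal U^-(\mathcal C^+(L))=\mathcal L\mathcal U(L)$, the set $\mathcal U^-(\mathcal C_r)$ lies in $\mathcal L\mathcal U(\mathcal A_s)$, which is disjoint from $\mathcal C_s$. That is, $\mathcal C_r\preceq\mathcal C_s$.
\end{itemize}
Equivalently, replacing each circle in \eqref{eq:Ar} by the closed half-annulus below it leaves $\mathcal U(\mathcal A_r)$ unchanged and makes the generating sets increasing in $r$. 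This argument needs neither a density argument nor the irrationality of $\varrho$.
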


The next elementary fact prevents equality from being confused with overlap.

\begin{lemma}\label{lem:distinct}
If $r\ne s$, then $\mathcal C_r\ne\mathcal C_s$.
\end{lemma}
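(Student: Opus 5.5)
We argue by contradiction, using only the four properties of Proposition~\ref{prop:jager-family} and the irrationality of $\varrho$. Suppose $\mathcal C_r=\mathcal C_s$ with $r<s$, and put $d:=s-r>0$.

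The first step is to show that the set of periods
\[
 P:=\{t\in\R:\ \mathcal C_{u+t}=\mathcal C_u\ \text{for all } u\in\R\}
\]
is nonempty beyond $0$. Here we use the monotonicity \eqref{eq:weak-order} together with the antisymmetry of $\preceq$ on circloids. For every $u\in[r,s]$ we have $\mathcal C_r\preceq\mathcal C_u\preceq\mathcal C_s=\mathcal C_r$, hence $\mathcal C_u=\mathcal C_r$. So $u\mapsto\mathcal C_u$ is constant on the whole interval $[r,s]$. Applying \eqref{eq:integer-equiv} and \eqref{eq:f-equiv}, which commute with the parametrisation, the map is also constant on every translate $[r,s]+m+n\varrho$ with $m,n\in\Z$.

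The second step uses the density of $\Z+\varrho\Z$ in $\R$, which holds because $\varrho\notin\Q$. The translates $[r,s]+m+n\varrho$ are intervals of a fixed length $d>0$ whose left endpoints form a dense set. Hence any two points $u<u'$ can be joined by a finite chain of such intervals with overlapping consecutive members. For instance, one chooses successive left endpoints in $(u+jd/2,\,u+jd/2+d/4)$ for $j=-1,0,1,\dots$. On each interval of the chain the family is constant, and consecutive intervals overlap, so the constant values agree. Therefore $\mathcal C_u=\mathcal C_{u'}$ for all $u,u'\in\R$; that is, $u\mapsto\mathcal C_u$ is globally constant.

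The third step derives the contradiction from localization. Global constancy gives $\mathcal C_0=\mathcal C_N$ for every integer $N$. But \eqref{eq:localization} places $\mathcal C_0\subset\T\times[-\tau,\tau]$ and $\mathcal C_N\subset\T\times[N-\tau,N+\tau]$. These bands are disjoint once $N>2\tau$, and circloids are nonempty, which is absurd. Alternatively, $\mathcal C_{1}=\widehat T_1(\mathcal C_0)=\mathcal C_0$ would force the compact set $\mathcal C_0$ to be invariant under a vertical translation, which is impossible.

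The main point requiring care is the first step, namely the legitimacy of the antisymmetry of $\preceq$ on circloids and of the transitivity argument squeezing $\mathcal C_u$. Both are among the standard facts recalled in the preliminaries, so no genuine obstacle is expected. The chaining in the second step is the only place where irrationality enters. Irrationality is in fact essential: without it the family could, a priori, be locally constant on intervals arranged periodically.
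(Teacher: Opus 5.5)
Your proof is correct and follows essentially the same route as the paper. You squeeze via \eqref{eq:weak-order} and antisymmetry to get constancy on $[r,s]$, transport this by $\Z+\varrho\Z$, chain overlapping translates using density, and contradict \eqref{eq:localization}; the only differences are that the paper chains along the progression $r+j\varepsilon$ with a single $\varepsilon\in\Z+\varrho\Z$, $0<\varepsilon<d$, rather than your choice of left endpoints, and that the set of periods $P$ you introduce is never actually used and can be dropped.
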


\begin{proof}
Suppose $r<s$ and $\mathcal C_r=\mathcal C_s$.  By \eqref{eq:weak-order}, $\mathcal C_t = \mathcal{C}_r$ for every $t\in[r,s]$.  Put $d=s-r>0$ and $G=\Z+\varrho\Z.$ The irrationality of $\varrho$ makes $G$ dense.  Equations \eqref{eq:integer-equiv} and \eqref{eq:f-equiv}, together with commutation of $\widehat T_1$ and $\widehat f$, imply
\begin{equation}\label{eq:G-action}
 \mathcal C_{u+p+q\varrho}=\widehat T^p\widehat f^{\ q}(\mathcal C_u)
 \quad \text{for every} \ p,q\in\Z.
\end{equation}
Consequently the family is constant on every translate $[r+g,s+g]$, $g\in G$.

Choose $\varepsilon\in G$ with $0<\varepsilon<d$.  Consecutive intervals
$[r+j\varepsilon,s+j\varepsilon]$, $j\in\Z$, overlap and their union is $\R$.  Their constant values agree on the overlaps.  Hence $\mathcal C_t$ would be independent of $t$ on all of $\R$.  This contradicts the localization \eqref{eq:localization}, since a fixed nonempty compact set cannot lie in $\T\times[t-\kappa,t+\kappa]$ for every $t$.
\end{proof}

Note that Lemma \ref{lem:distinct} and condition \eqref{eq:weak-order} together do not imply that $\mathcal{C}_r \cap \mathcal{C}_s = \emptyset$. The following proposition shows that whenever this happens, or in other words, whenever the order is strict $r < s \implies \mathcal{C}_r \prec \mathcal{C}_s$, we can construct a semiconjugacy with $R_{\varrho}$ where each fiber of the semiconjugacy contains a circloid of the family $\{\mathcal{C}_r\}_{r\in \R}$.

\begin{proposition} \label{prop:construction_of_irrational_factor}
    If the J\"ager's family $\left\{ \mathcal{C}_r \right\}_{r\in \R}$ is pairwise disjoint, then $f$ is a topological extension of an irrational rotation.
\end{proposition}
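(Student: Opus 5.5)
We define the factor map by a ``height'' function on $\A$ and then push it down to $\T^2$. For $z\in\A$ put
\[
 H(z):=\sup\{r\in\R : z\in \mathcal C_r\cup\mathcal U^+(\mathcal C_r)\}.
\]
Equivalently, $H(z)$ is the infimum of those $r$ with $z\in\mathcal C_r\cup\mathcal U^-(\mathcal C_r)$. By the localization \eqref{eq:localization}, $|H(z)-\pr{2}(z)|\le\tau$, so $H$ is finite. Monotonicity \eqref{eq:weak-order} makes the sets $\mathcal C_r\cup\mathcal U^+(\mathcal C_r)$ decreasing in $r$. The equivariances \eqref{eq:integer-equiv} and \eqref{eq:f-equiv}, together with the fact that $\widehat T_1$ and $\widehat f$ preserve ends and hence the order, give
\[
 H\circ\widehat T_1=H+1,\qquad H\circ\widehat f=H+\varrho .
\]
Consequently $h:=\pi_{\T}\circ H$ descends through $\hat\pi$ to a map $\T^2\to\T$ satisfying $h\circ f=R_\varrho\circ h$. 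We will also check that $H$ is invariant under the horizontal deck transformation of $\A$; this is automatic, because everything is defined on $\A=\T\times\R$.

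The key step is \textbf{continuity} of $H$, and this is where disjointness enters. Fix $z$ and $r<H(z)<s$. We claim $H^{-1}((r,s))$ contains a neighbourhood of $z$.

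Pick $r<r'<H(z)<s'<s$. Pairwise disjointness of the distinct circloids (Lemma~\ref{lem:distinct}) means the order is strict. Hence $\mathcal C_{r'}\subset\mathcal U^-(\mathcal C_{H(z)})$ and $\mathcal C_{s'}\subset\mathcal U^+(\mathcal C_{H(z)})$. From this we get that $z$ lies in the open set
\[
 V:=\mathcal U^+(\mathcal C_{r'})\cap\mathcal U^-(\mathcal C_{s'}).
\]
To see that $z\notin\mathcal C_{r'}$, note that $z\in\mathcal C_{r'}$ would force $H(z)\le r'$ by the infimum characterization. The case of $\mathcal C_{s'}$ is symmetric. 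On $V$ we have $r'\le H\le s'$ directly from the definitions, so $H$ is continuous.

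The delicate point in this step is checking that the sup and inf characterizations agree and that a point cannot lie in $\mathcal U^-(\mathcal C_t)$ for some $t<H(z)$. Both use the standard order facts recalled in the Circloids subsection: a circloid $C$ with $A\preceq C\preceq B$ lies in $[A,B]$, and distinct comparable circloids are disjoint exactly when the order is strict. We expect this bookkeeping to be the main (though routine) obstacle.

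Finally, surjectivity of $h$ is immediate: $H$ is continuous and $H\circ\widehat T_1=H+1$, so $H$ is onto $\R$. Since $\varrho\notin\Q$, $h$ is an irrational circle factor.

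As a side remark, each fiber $h^{-1}(\pi_\T(r))$ contains $\hat\pi(\mathcal C_r)$, as claimed before the statement. This follows because every point of $\mathcal C_r$ has $H$-value exactly $r$, again by strictness of the order.
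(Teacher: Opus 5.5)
Your proposal is correct and follows essentially the same route as the paper. Both define a height function $H$ as a supremum over the ordered family, get continuity by sandwiching the point in $\mathcal U^+(\mathcal C_{r'})\cap\mathcal U^-(\mathcal C_{s'})$ using strictness of the order, derive equivariance from \eqref{eq:integer-equiv}--\eqref{eq:f-equiv}, and descend through $\hat\pi$; your use of the closed sets $\mathcal C_r\cup\mathcal U^+(\mathcal C_r)$ with a dual infimum characterization is only cosmetic, and your surjectivity check via $H\circ\widehat T_1=H+1$ fills in a point the paper leaves implicit.
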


\begin{proof}

 Consider the map $H \colon \A \to \R$ given by \[H(\hat z) := \sup \left\{ r \in \R : \hat z \in \mathcal{U}^+(\mathcal{C}_r) \right\}.\] 
This map is well-defined. In fact, write $\hat z = (\tilde x, y)$. It follows from \eqref{eq:localization} that for any $r \in \R$ such that $r < y-\tau$, we have $\hat z \in \mathcal{U}^+(\mathcal{C}_r)$. So, $\{r \in \R : \mathcal{U}^+(\mathcal{C}_r)\} \neq \emptyset$. Analougsly, for any $r > y+\tau$, $\hat z \in \mathcal{U}^-(\mathcal{C}_r)$. So this set is also bounded above.

\medskip

\noindent \textsc{Claim.} \emph{The map $H$ is continuous and, for every $\hat z \in \A$, it satisfies:}
\begin{align}
 H \circ \widehat T_1(\hat z) = H(\hat z) + 1,                                                         \label{eq:H_is_annular}\\
 H\circ \widehat f(\hat z) = H(\hat z) + \varrho.                    \label{eq:H_commute_with_f}
\end{align}

By pairwise disjointness of $\{\mathcal{C}_r\}_{r \in \R}$, we have $\mathcal{C}_t \subseteq H^{-1}(t)$ for every $t \in \R$. In fact, if $\hat{z} \in \mathcal{C}_t$, it lies above every $\mathcal{C}_r$ with $r < t$ and below every $\mathcal{C}_s$ with $s > t$.

To prove continuity, fix $\hat z$ and $\epsilon > 0$. Put $a = H(\hat z) - \epsilon$ and $b = H(\hat z) + \epsilon$. Choose $r$ with $a < r < H(\hat z)$. The nesting of upper components gives $\hat z \in \mathcal{U}^+(\mathcal{C}_r)$. We also have $\hat z \in \mathcal{U}^-(\mathcal{C}_b)$. Consequently, $V : = \mathcal{U}^+(\mathcal{C}_r) \cap \mathcal{U}^-(\mathcal{C}_b)$ is an open neighborhood of $\hat z$. For every $\hat w \in V$, we have $a < H(\hat w) \leq b$. Thus, $|H(\hat w) - H(\hat z)| \leq \epsilon$.

Now, since $\widehat T_1(\hat z) \in \mathcal{U}^+(\mathcal{C}_r)$ if and only if $\hat z \in \mathcal{U}^+(\mathcal{C}_{r-1})$, the set whose supremum defines $H(\widehat T_1(\hat z))$ is obtained by adding $1$ to the set defining $H(\hat z)$. This proves \eqref{eq:H_is_annular}. Similarly, \[\widehat f(\hat z) \in \mathcal{U}^+(\mathcal{C}_r) \iff \hat z \in \mathcal{U}^+(\mathcal{C}_{r-\varrho}),\]
and taking suprema proves \eqref{eq:H_commute_with_f}.

By \eqref{eq:H_is_annular}, we obtain a well-defined map $h \colon \T^2 \to \T$, $h(z) := \pi \circ H(\hat z)$ for every $\hat z \in \pi_{\A}^{-1}(z)$. Finally, note that equation \eqref{eq:H_commute_with_f} descends to $h \circ f = R_{\varrho} \circ h$.
\end{proof}

\section{Proof of the main theorem}
\label{sec:proof_main_thm}

By Section \ref{sec:ordered_circloid_family}, to prove Theorem \ref{thm:main}, it suffices to show the following result:

\begin{theorem}
Assume that $f \in \Homeo{k}(\T^2)$, $k \neq 0$, has $\varrho$-bounded vertical deviation for some $\varrho \in \R \setminus \Q$. Then its associated family of J\"ager circloids is pairwise disjoint.
\end{theorem}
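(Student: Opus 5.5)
We argue by contradiction. Suppose that $\mathcal C_r\cap\mathcal C_s\neq\emptyset$ for some $r<s$. By \eqref{eq:weak-order} and Lemma~\ref{lem:distinct}, $\mathcal C_r\preceq\mathcal C_s$ are distinct, comparable and not strictly ordered. By monotonicity, every $\mathcal C_t$ with $t\in[r,s]$ then lies in the annular continuum $B:=[\mathcal C_r,\mathcal C_s]$. Using \eqref{eq:G-action} and the density of $G=\Z+\varrho\Z$, we first normalize: after replacing $r,s$ by translates, we may assume $s-r$ is as small as we like while the intersection persists. This is the ``bunch'' situation in which the copy-counting results of Passeggi--Sambarino \cite[Propositions~2.1 and~2.4]{PasseggiSambarino2020} apply. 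Lifting $B$ to $\R^2$, we take a connected component $\widetilde B$ of $\pi_{\A}^{-1}(B)$ (or of the set obtained by cutting $\A$ along the overlap). Their statements give the following dichotomy. Either a fixed compact continuum $\widetilde K\subset\widetilde B$ meets only boundedly many horizontal deck translates $T_{(m,0)}\widetilde K'$ of a fixed continuum, or arbitrarily long horizontal stretching inside $\widetilde B$ forces the number of translates met to grow. We will use this in the form: \emph{a continuum contained in a fixed lifted bunch whose horizontal diameter exceeds $N$ must intersect at least $\sim N$ distinct horizontal translates of a fixed continuum $\widetilde K$ lying in the same bunch}.

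The second step is the shear estimate. We pick $q\in\Z\setminus\{0\}$ and $m\in\Z$ with $0<|\delta|$ small, where $\delta:=q\varrho-m$. By \eqref{eq:f-equiv} and \eqref{eq:integer-equiv}, $\widehat T_1^{-m}\widehat f^{\,q}(\mathcal C_t)=\mathcal C_{t+\delta}$. For $|\delta|<s-r$, this map $g:=\widehat T_1^{-m}\widehat f^{\,q}$ therefore sends the overlap configuration into one nearby. We choose points $z\in\mathcal C_r\cap\mathcal C_s$ and $w$ in $g(\cdot)$-related positions, specifically lifts $\tilde z$ and $\tilde w=\widetilde f^{\,q}(\tilde z)-(km',m)$ suitably normalized. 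Bounded vertical deviations \eqref{eq:bvd-intro} show that the vertical offset of $\widetilde f^{n}(\tilde w)$ and $\widetilde f^{n}(\tilde z)$ stays within $\delta+O(C)$, with exact mean drift $\delta$ per unit of height. Since $\widetilde f\circ T_{(0,1)}=T_{(k,1)}\circ\widetilde f$ and $\Delta_1$ is bounded, the horizontal displacement accumulates $k$ times the height difference at each step. Summing yields
\[
\Bigl|\pr{1}\bigl(\widetilde f^{n}(\tilde w)-\widetilde f^{n}(\tilde z)\bigr)-nk\delta\Bigr|\le K
\]
for a constant $K$ independent of $n$. The key input is that the Birkhoff sums of $\Delta_2-\varrho$ are bounded, so the height difference is $\delta$ up to a bounded error whose accumulated contribution telescopes rather than growing linearly. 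This is the step I expect to be most delicate: one must arrange the bounded error to enter the horizontal sum only boundedly. I would try to do so by comparing $\widetilde f^{n}$ with $\widetilde f^{n}\circ T_{(0,\cdot)}$ and using the cocycle relation \eqref{eq:cocycle_properties} for the second coordinate.

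Finally, we combine the two steps. We take a continuum $\widetilde K\subset\widetilde B$ joining $\tilde z$ to $\tilde w$; this is possible since both lie in the same lifted bunch. Its image $\widetilde f^{n}(\widetilde K)$, renormalized by the appropriate deck transformation $T_{(\cdot,-\lfloor n\varrho\rfloor)}$, lies in a lift of $\widehat T_1^{-\lfloor n\varrho\rfloor}\widehat f^{\,n}(B)=[\mathcal C_{r+\theta_n},\mathcal C_{s+\theta_n}]$ with $\theta_n\in[0,1)$. Choosing $n$ along returns with $\theta_n\to0$ keeps these continua inside a fixed bunch. By the shear estimate, their horizontal diameter is at least $n|k\delta|-K\to\infty$, so copy-counting forces them to meet arbitrarily many horizontal translates of $\widetilde K$. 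On the other hand, pulling back by $\widetilde f^{-n}$ preserves horizontal translates up to the shear $T_{(kn\ell,\cdot)}$, which is trivial for $\ell=0$. Hence each intersection of $\widetilde f^{n}(\widetilde K)$ with $T_{(j,0)}\widetilde K$ corresponds to an intersection of $\widetilde K$ with $T_{(j,0)}\widetilde f^{-n}(\widetilde K)$. Passeggi--Sambarino's bound for a fixed compact continuum inside the bunch makes these counts uniformly bounded, which is a contradiction. Hence $\mathcal C_r\cap\mathcal C_s=\emptyset$ for all $r\neq s$.
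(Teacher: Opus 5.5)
Your shear estimate is sound. Since $\tilde w$ is a deck translate of $\widetilde f^{\,q}(\tilde z)$, the vertical errors and the $\Delta_1$-terms do telescope and leave only $q$ boundary terms. The paper gets the same bound by applying its $q$-step Lemma~\ref{lem:qstep} at $\widetilde f^{\,n}(\tilde z)$.

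\textbf{The gap is in the final contradiction.} You count horizontal translates of $\widetilde K$ that \emph{intersect} the renormalized image $G_n(\widetilde K)$, where $G_n=T_{(0,-m_n)}\widetilde f^{\,n}$. Because $G_n$ commutes with $T_{(1,0)}$, $G_n(\widetilde K)\cap T_{(j,0)}\widetilde K\neq\emptyset$ holds exactly when $\widetilde K\cap T_{(j,0)}G_n^{-1}(\widetilde K)\neq\emptyset$. So the two counts are literally equal, and the second cannot be uniformly bounded while the first blows up. The set $G_n^{-1}(\widetilde K)$ is not a fixed compact continuum; it is stretched by the backward shear.

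Intersection counts carry no rigidity. Your argument would apply verbatim to the linear twist $(\tilde x,\tilde y)\mapsto(\tilde x+\tilde y,\tilde y)$ on $\T\times[0,1]$ with $\widetilde K$ a vertical segment, where both counts grow like $n$ and nothing is contradictory. This shows the hypothesis $\mathcal C_r\cap\mathcal C_s\neq\emptyset$ never actually enters your argument.

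\textbf{The missing idea is to count containment, not intersection.} Fix a component $X$ of $\widetilde{\mathcal C}_{r}\cap\widetilde{\mathcal C}_{s}$ and count translates $T_{(\ell,0)}X$ \emph{contained} in the long continuum.
\begin{itemize}
\item The bunch structure is what forces a long continuum, inside the lift of an annular continuum strongly contained in the bunch, to contain many such translates (Proposition~\ref{prop:PS}).
\item Containment pulls back asymmetrically. From $T_{(\ell,0)}X\subset G(Z)$ you get $T_{(\ell,0)}p\in Z$ for a single fixed point $p\in G^{-1}(X)$. Hence the count is at most $\lfloor\diam(\pr{1}Z)\rfloor+1$ (Lemma~\ref{lem:pullback}).
\end{itemize}

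\textbf{You also need margin.} The copy-counting applies to annular continua strongly contained in the bunch, not to $[\mathcal C_r,\mathcal C_s]$ itself. Your renormalized images lie in $[\mathcal C_{r+\theta_n},\mathcal C_{s+\theta_n}]$, which sticks out of $[\mathcal C_r,\mathcal C_s]$, and taking $z\in\mathcal C_r\cap\mathcal C_s$ puts your points on the boundary of the bunch. The paper instead:
\begin{itemize}
\item picks $z\in\mathcal C_t$ with $r_0<a^-<a<t<b<b^+<s_0$;
\item builds $Z\subset\widetilde A$ for $A=[\mathcal C_a,\mathcal C_b]$;
\item uses returns with $\delta_j\to0$, so that $\widehat G_j(A)\subset B=[\mathcal C_{a^-},\mathcal C_{b^+}]\Subset[\mathcal C_{r_0},\mathcal C_{s_0}]$.
\end{itemize}

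A smaller point: replacing $r,s$ by $G$-translates does not change $s-r$. Shrinking the interval is valid, since $\mathcal C_r\cap\mathcal C_s\subset\mathcal C_u$ for $u\in[r,s]$, but the step is not needed.
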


To prove this theorem, we will use the method of Passeggi and Sambarino \cite{PasseggiSambarino2020}, adapting it to our setting. Originally, they worked on $\R\times\T$, where the noncompact direction is horizontal and the deck direction is vertical. The change of coordinates $(\tilde x, y) \mapsto (y, \tilde x)$ translates their statements into the following form. Below, we state exactly the two results that will be used. We also note that although Passeggi and Sambarino worked within the isotopy class of the identity, some of their results have a purely topological flavor. Whenever two different horizontal rotation vectors were needed, we replaced this with the twist term provided by the isotopy class of a Dehn twist.

\subsection{The Passeggi-Sambarino method}

Let $\mathcal C^-\preceq\mathcal C^+$ be distinct intersecting circloids.  Their \emph{bunch} is the set $[\mathcal C^-,\mathcal C^+]$. We say that an annular continuum $A$ is \emph{strongly contained} in $[\mathcal C^-,\mathcal C^+]$, and write $A \Subset [\mathcal C^-,\mathcal C^+]$, if
\begin{equation}\label{eq:strong}
 \mathcal C^-\preceq A\preceq\mathcal C^+,
 \qquad
 \mathcal C^-\not\subset A,
 \qquad
 \mathcal C^+\not\subset A.
\end{equation}

Fix an annular continuum $A \Subset [\mathcal C^-,\mathcal C^+]$ in a bunch bounded by $\mathcal{C}^-$ and $\mathcal{C}^+$. For a set $E\subset\A$, write $\widetilde E=\pi^{-1}(E)\subset\R^2$. Consider a continuum $Z \subset \tilde A$ and a connected component $X \in \cc(\tilde{\mathcal{C}}^- \cap \tilde{\mathcal{C}}^+)$. The \emph{horizontal homotopical intersection number} of $Z$ and $X$ is defined by \begin{equation}\label{eq:nu}
 \nu_h(X,Z)=
 \#\left\{\ell\in\Z\ :\ T_{(\ell,0)}X \subset Z\right\}.
\end{equation}

For the proof of the following proposition we refer to \cite[Propositions 2.1 and 2.4]{PasseggiSambarino2020}. Note that no dynamics and no lift-commutation assertion are part of this topological proposition.

\begin{proposition}[Passeggi-Sambarino, Propositions 2.2 and 2.5]\label{prop:PS}
If $\ Z_j$ is a sequence of continua in $\tilde A$ such that $\diam(Z_j)\to+\infty$, then $\nu_h(X,Z_j)\to+\infty$. Moreover, for any $\hat z, \hat w\in A$, there is a continuum $Z\subset\widetilde A$ meeting both $\pi_\A^{-1}(\hat z)$ and $\pi_\A^{-1}(\hat w)$.
\end{proposition}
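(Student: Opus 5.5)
The plan is to work entirely in the universal cover. There I will exploit two facts: the lifts $\widetilde{\mathcal C}^\pm$ are horizontally $\Z$-periodic closed sets lying in a bounded horizontal strip, and the two circloids "pinch together" along the translates $T_{(\ell,0)}X$ of the fixed component $X\in\cc(\widetilde{\mathcal C}^-\cap\widetilde{\mathcal C}^+)$. Since $\mathcal C^-\preceq A\preceq\mathcal C^+$ and all three sets are compact, $\widetilde A$ lies in a strip $\R\times[-\kappa,\kappa]$. Hence a continuum $Z\subset\widetilde A$ has large diameter exactly when it has large horizontal extent, and both assertions reduce to a statement about continua running horizontally inside the lifted bunch $\widetilde{[\mathcal C^-,\mathcal C^+]}$.

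\emph{Step 1: structure of the lifted bunch.} I first show that $X$ is compact, so it has bounded horizontal width $w_X$. If it were not, it would be an unbounded connected subset of $\widetilde{\mathcal C}^-\cap\widetilde{\mathcal C}^+$. Its projection would then contain an essential annular continuum inside $\mathcal C^-\cap\mathcal C^+$, and minimality of circloids would force $\mathcal C^-=\mathcal C^+$, a contradiction. Next I use that $\mathcal C^-\cap\mathcal C^+\neq\emptyset$ and that $\mathcal C^-\neq\mathcal C^+$ are comparable. These should show that the open region $\widetilde{\mathcal U^+(\mathcal C^-)}\cap\widetilde{\mathcal U^-(\mathcal C^+)}$ has only horizontally bounded components. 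Moreover, any path in $\R\times[-\kappa,\kappa]$ from far left of $T_{(\ell,0)}X$ to far right must meet $\widetilde{\mathcal C}^-\cap\widetilde{\mathcal C}^+$ near $T_{(\ell,0)}X$, since the lifts of $\mathcal C^-$ and $\mathcal C^+$ touch there and separate the strip. I would phrase this via a separation argument in the two-point compactification of each vertical "slab" between consecutive translates, in the spirit of the separation facts quoted at the end of Section~\ref{sec:pre-notations}.

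\emph{Step 2: from crossing to containment.} This is the step I expect to be the main obstacle. Step 1 only gives that a horizontally long $Z\subset\widetilde A$ \emph{meets} a neighbourhood of each $T_{(\ell,0)}X$ it crosses, whereas $\nu_h$ counts translates \emph{contained} in $Z$. The idea is to use the strong containment \eqref{eq:strong}: $A$ lies between the circloids but contains neither of them. Near a pinch point the only room left between $\widetilde{\mathcal C}^-$ and $\widetilde{\mathcal C}^+$ is $X$ itself. So a connected set crossing from the bubble on the left of $T_{(\ell,0)}X$ to the bubble on the right must pass through the entire pinched component; otherwise it would furnish a path in $\widetilde A$ avoiding part of $X$. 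That path, projected and combined with $\mathcal C^\mp$, would produce an annular continuum strictly smaller than one of the circloids, contradicting minimality. I would first try to make this precise with a boundary-bumping argument: take the component of $Z\cap \overline{V}$ for a small box $V$ around $T_{(\ell,0)}X$, and show that it must contain $T_{(\ell,0)}X$. Granting this, a continuum of horizontal width $W$ crosses at least $\lfloor W\rfloor-w_X-2$ translates, so $\nu_h(X,Z_j)\to+\infty$.

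\emph{Step 3: joining two fibres.} For the second assertion I approximate $A$ by nested closed topological annuli $A_n\downarrow A$ whose lifts $\widetilde A_n$ are closed connected strips. I fix lifts $\tilde z,\tilde w$ and an integer $\ell$, and pick arcs $\gamma_n\subset\widetilde A_n$ from $\tilde z$ to $T_{(\ell,0)}\tilde w$. I then take the component $K$ of $\limsup\gamma_n\subset\widetilde A$ containing $\tilde z$. If $K$ is bounded, then by boundary bumping in the one-point compactification it contains $T_{(\ell,0)}\tilde w$, and it is the required continuum. If $K$ is unbounded, the Step 2 argument applied to compact pieces of $K$ shows that $K$ contains whole translates of $X$. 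The same construction starting from $\tilde w$ also reaches translates of $X$. Since $X$ is connected and the translates are linked through the pinched structure of the bunch, I can concatenate: a compact piece of $K$ from $\tilde z$ to some $T_{(a,0)}X$, the appropriate translate of the piece starting from $\tilde w$, and the translates of $X$ in between. This yields a continuum in $\widetilde A$ meeting both $\pi_\A^{-1}(\hat z)$ and $\pi_\A^{-1}(\hat w)$.
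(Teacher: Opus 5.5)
The paper gives no proof of this proposition. It imports both assertions from Passeggi--Sambarino after exchanging coordinates, so your proposal has to be self-contained, and its core is missing.

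\emph{Step~2 is unproved, and the suggested mechanism fails.} Everything hinges on Step~2: a continuum $Z\subset\widetilde A$ that stretches across $T_{(\ell,0)}X$ must \emph{contain} it. You explicitly leave this as ``granting this'', and the mechanism you propose cannot work. Adjoining $\pi_\A(Z)$, or a path in $A$, to $\mathcal C^-$ or $\mathcal C^+$ produces a set that \emph{contains} that circloid. So minimality gives no contradiction: no proper subcontinuum of a circloid is ever produced. Relatedly, the hypothesis $\mathcal C^\pm\not\subset A$ is mentioned but never actually enters any step.

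\emph{Step~1 is only asserted.} You write that the structure ``should show'' separation, but nothing is proved about the components of the lifted open region between the circloids. The picture of one ``bubble'' on each side of $T_{(\ell,0)}X$ is also not general. The intersection $\mathcal C^-\cap\mathcal C^+$ can have infinitely many components, for instance a Cantor set when the circloids are the graphs of $\pm g$ with $g$ vanishing exactly on a Cantor set. Then there are infinitely many pinches between $T_{(\ell,0)}X$ and $T_{(\ell+1,0)}X$. Finally, the objects you must control are continua in $\widetilde A$, which need not contain arcs. Your path-based separation claim must therefore be recast for continua in $\widetilde{[\mathcal C^-,\mathcal C^+]}$; for arbitrary paths in the strip it is not the right statement.

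\emph{What a local argument looks like, and what it does not give.} The natural local mechanism is separation, not minimality. A circloid satisfies $\mathcal C=\partial\mathcal U^-(\mathcal C)=\partial\mathcal U^+(\mathcal C)$. So if $x_0\in T_{(\ell,0)}X\setminus Z$, a small disk $D$ around $x_0$ with $D\cap Z=\emptyset$ meets both $\widetilde{\mathcal U}^-(\mathcal C^-)$ and $\widetilde{\mathcal U}^+(\mathcal C^+)$. Joining $D$ to the lower and upper ends through these connected open sets gives a proper line disjoint from $Z$, because $Z\subset\widetilde{[\mathcal C^-,\mathcal C^+]}$ misses both sets. This line confines $Z$ to one side of it, so a \emph{fixed} continuum cannot cross $x_0$ while missing it.

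However, the horizontal width of that line depends on the radius of $D$, that is, on $d(x_0,Z)$. It therefore yields no threshold that is uniform along the sequence $Z_j$. That uniformity is exactly what $\nu_h(X,Z_j)\to+\infty$ requires, and also what your count $\lfloor W\rfloor-w_X-2$ requires, and your plan gives no argument for it.

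\emph{Step~3 is fine conditionally.} Once Step~2 is available, the $\limsup$/boundary-bumping dichotomy works. The set $K_{\tilde z}\cup T_{(a-b,0)}K_{\tilde w}$ is then connected through the common translate $T_{(a,0)}X$. As written, though, the whole proposal rests on the unproved Step~2.
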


We shall not use \cite[Corollary 2.4]{PasseggiSambarino2020} exactly as written. The following direct observation is the required replacement.

\begin{lemma}\label{lem:pullback}
Let $Z\subset\R^2$ be a continuum, $X\subset\R^2$ be nonempty set, $\hat G \in \Homeo{0}(\A)$ and  $G \colon \R^2 \to \R^2$ be a lift of $\hat G$. Then
\begin{equation}\label{eq:pullback-bound}
 \#\left\{\ell\in\Z\ :\ T_{(\ell,0)}X\subset G(Z)\right \}
 \le \lfloor\diam(\pr{1} Z)\rfloor+1.
\end{equation}
\end{lemma}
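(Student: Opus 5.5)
The plan is to move the problem back to $Z$ using the equivariance of $G$, and then count integer translates inside a bounded interval.

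\emph{Step 1: $G$ commutes with horizontal deck translations.} The deck group of $\pi_{\A}\colon\R^2\to\A$ is $\{T_{(\ell,0)}:\ell\in\Z\}\cong\Z$. Since $G$ lifts $\hat G$, we have $G\circ T_{(1,0)}=T_{(d,0)}\circ G$ for some $d\in\Z$. The integer $d$ is the action of $\hat G$ on $\pi_1(\A)\cong\Z$. Because $\hat G\in\Homeo{0}(\A)$ is homotopic to the identity, $d=1$. Hence $G\circ T_{(\ell,0)}=T_{(\ell,0)}\circ G$ for every $\ell\in\Z$, and the same holds for $G^{-1}$.

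\emph{Step 2: pull back.} Put $Y:=G^{-1}(X)$, which is nonempty since $X$ is. By Step 1,
\[
T_{(\ell,0)}X\subset G(Z)\iff G^{-1}\bigl(T_{(\ell,0)}X\bigr)=T_{(\ell,0)}Y\subset Z .
\]
So the set on the left of \eqref{eq:pullback-bound} equals $\{\ell\in\Z: T_{(\ell,0)}Y\subset Z\}$.

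\emph{Step 3: count.} Fix $y\in Y$. Every admissible $\ell$ satisfies $y+(\ell,0)\in Z$, so $\pr{1}(y)+\ell\in\pr{1}Z$. Since $Z$ is a continuum, $\pr{1}Z$ is a compact interval $[a,a+D]$ with $D=\diam(\pr{1}Z)$. The admissible $\ell$ are therefore integers lying in the interval $[a-\pr{1}(y),\,a-\pr{1}(y)+D]$ of length $D$. Such an interval contains at most $\lfloor D\rfloor+1$ integers, which gives the bound \eqref{eq:pullback-bound}.

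The only step needing care is Step 1, namely that the deck action of $G$ is trivial ($d=1$). This follows from $\hat G$ being homotopic to the identity. The remaining steps are elementary.
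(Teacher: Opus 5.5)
Your proof is correct and follows essentially the same route as the paper: use the commutation $G\circ T_{(\ell,0)}=T_{(\ell,0)}\circ G$ to pull the inclusions back by $G^{-1}$, fix one point of $G^{-1}(X)$, and note that all of its admissible horizontal translates lie in $Z$, so the integers $\ell$ are confined to an interval of length $\diam(\pr{1}Z)$. Your Step 1 only spells out, via the action of $\hat G$ on $\pi_1(\A)$, the commutation the paper asserts directly, and counting integers in an interval replaces the paper's finite-subset argument.
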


\begin{proof}
Let $E \subset \Z$ be the set counted on the left.  Choose any point $z\in G^{-1}(X)$.  Since $G$ is a lift of a homeomorphism of the annulus, $G \circ T_{(1,0)} = T_{(1,0)} \circ G$. Thus, for each $\ell\in E$
\[ T_{(\ell,0)}(z)
 \in G^{-1}(T_{(\ell,0)}X)
 \subset Z.\]
Choose any finite subset $\{\ell_1<\cdots<\ell_N\}\subset E$.  The first-coordinate distance between $T_{(\ell_1,0)}z$ and $T_{(\ell_N,0)}z$ is $\ell_N-\ell_1\ge N-1$.  Since both points belong to $Z$,
\[N-1\le\ell_N-\ell_1\le\diam(\pr{1} Z).\]
This holds for every finite subset of $E$.  Hence $E$ is finite and has at most $\lfloor\diam(\pr{1} Z)\rfloor+1$ elements, proving \eqref{eq:pullback-bound}.
\end{proof}

\subsection{The shear-growth estimate}

The next computations are the mechanism replacing the two distinct rotation vectors in Passeggi-Sambarino \cite{PasseggiSambarino2020}.

\begin{lemma}\label{lem:qstep}
For any $q\ge1$, $\tilde z=(\tilde x, \tilde y)\in\R^2$ and $z = \pi(\tilde z)$, we can write
\begin{equation}\label{eq:qstep}
 \pr{1}\bigl(\widetilde f^{q}(\tilde z)-\tilde z\bigr)
 =qk\tilde y+\frac{k\varrho q(q-1)}2+R_q(z),
\end{equation}
where
\[ |R_q|\le q\bigl(|k|C+\|\Delta_1\|_\infty\bigr).\]
\end{lemma}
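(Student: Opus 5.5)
We prove \eqref{eq:qstep} by telescoping the horizontal displacement along the orbit and using the Dehn twist structure of each single step. Write $\tilde z_j := \widetilde f^{\,j}(\tilde z)$ for $0\le j\le q$, with $\tilde z_0=\tilde z$, and let $\tilde y_j := \pr{2}(\tilde z_j)$. Since $\widetilde f = \I_k + \Delta_{\tilde f}$, each step satisfies
\[
 \pr{1}\bigl(\tilde z_{j+1}-\tilde z_j\bigr) = k\tilde y_j + \Delta_1\bigl(f^j(z)\bigr).
\]
Summing over $j=0,\dots,q-1$ gives
\[
 \pr{1}\bigl(\widetilde f^{q}(\tilde z)-\tilde z\bigr)=k\sum_{j=0}^{q-1}\tilde y_j+\sum_{j=0}^{q-1}\Delta_1\bigl(f^j(z)\bigr).
\]
The second sum is bounded in absolute value by $q\|\Delta_1\|_\infty$.

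Next we control the heights $\tilde y_j$ using the bounded vertical deviation hypothesis \eqref{eq:bvd-intro}. We write $\tilde y_j = \tilde y + j\varrho + e_j$, where $e_j := \pr{2}\bigl(\widetilde f^{\,j}(\tilde z)-\tilde z\bigr)-j\varrho$ satisfies $|e_j|\le C$ for every $j$, and $e_0=0$. Then
\[
 k\sum_{j=0}^{q-1}\tilde y_j = qk\tilde y + k\varrho\sum_{j=0}^{q-1}j + k\sum_{j=0}^{q-1}e_j = qk\tilde y+\frac{k\varrho q(q-1)}{2}+k\sum_{j=0}^{q-1}e_j.
\]
The last term is bounded by $q|k|C$.

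We now define
\[
 R_q := \sum_{j=0}^{q-1}\bigl(k e_j+\Delta_1\circ f^j\bigr).
\]
Combining the two bounds gives $|R_q|\le q\bigl(|k|C+\|\Delta_1\|_\infty\bigr)$, which is the claimed estimate.

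One point remains: $R_q$ must depend only on $z=\pi(\tilde z)$, as the notation $R_q(z)$ requires. The $\Delta_1\circ f^j$ terms are $\Z^2$-periodic by construction. For the terms $e_j$, note that $e_j = \Delta_2^{(j)}(z)-j\varrho$ by \eqref{eq:cocycle_properties}. Indeed, $\pr{2}\bigl(\widetilde f^{\,j}(\tilde z)-\tilde z\bigr)=\pr{2}\bigl(\Delta^{(j)}(\tilde z)\bigr)$, because $\I_{kj}$ does not change the second coordinate. Hence $e_j$ is also $\Z^2$-periodic, and $R_q$ descends to a function on $\T^2$.

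The only potential obstacle is bookkeeping: checking that the first-coordinate formula $\pr{1}\widetilde f(\tilde x,\tilde y)=\tilde x+k\tilde y+\Delta_1$ holds with $\Delta_1$ evaluated at the current point $f^j(z)$. This follows directly from the definition $\Delta_{\tilde f}=\tilde f-\I_k$.
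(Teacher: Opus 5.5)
Your proposal is correct and follows the paper's proof: you telescope the one-step identity $\tilde x_{j+1}-\tilde x_j=k\tilde y_j+\Delta_1(f^j(z))$, write $\tilde y_j=\tilde y+j\varrho+e_j$ with $|e_j|\le C$ from \eqref{eq:bvd-intro}, and bound the two error sums exactly as the paper does. You also check that $R_q$ is $\Z^2$-periodic, so that the notation $R_q(z)$ is justified; the paper leaves this unstated.
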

\begin{proof}
Write $(\tilde x_j,\tilde y_j)=\widetilde f^{j}(\tilde z)$. By \eqref{eq:bvd-intro}, $\tilde y_j=\tilde y+j\varrho+D_j$ with $|D_j|\le C$. Since
\[ \tilde x_{j+1}-\tilde x_j=k\tilde y_j+\Delta_1(f^j(z)),\]
summing from $j=0$ to $q-1$ gives \eqref{eq:qstep} with
\[
 R_q=k\sum_{j=0}^{q-1}D_j+
 \sum_{j=0}^{q-1}\Delta_1\circ f^j,
\]
and the bound follows.
\end{proof}

\begin{proposition}\label{prop:shear}
For every $q\ge1$ and $\tilde z\in\R^2$, there is $K_{q,z}>0$, independent of $n$, such that
\begin{equation}\label{eq:shear}
 \left|\pr{1}\bigl(\widetilde f^{n+q}(\tilde z)-\widetilde f^{n}(\tilde z)\bigr)-knq\varrho\right|
 \le K_{q,z} \quad \text{for every} \ n\ge0.
\end{equation}
\end{proposition}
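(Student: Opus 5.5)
The plan is to apply Lemma~\ref{lem:qstep} to the point $\widetilde f^{n}(\tilde z)$ in place of $\tilde z$. Writing $\widetilde f^{n}(\tilde z)=(\tilde x_n,\tilde y_n)$, the lemma gives
\[
 \pr{1}\bigl(\widetilde f^{n+q}(\tilde z)-\widetilde f^{n}(\tilde z)\bigr)
 = qk\tilde y_n+\frac{k\varrho q(q-1)}{2}+R_q\bigl(f^n(z)\bigr),
\]
with $|R_q(f^n(z))|\le q(|k|C+\|\Delta_1\|_\infty)$ uniformly in $n$. This holds because the bound on $R_q$ in Lemma~\ref{lem:qstep} is a sup-norm bound valid at every point of $\T^2$.

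The only $n$-dependent term is $qk\tilde y_n$. Next I will invoke the bounded vertical deviation hypothesis \eqref{eq:bvd-intro} at the point $\tilde z$ with exponent $n$. This gives $\tilde y_n=\tilde y+n\varrho+D_n$ with $|D_n|\le C$. Substituting, the expression becomes
\[
 knq\varrho + qk\tilde y+\frac{k\varrho q(q-1)}{2}+qkD_n+R_q\bigl(f^n(z)\bigr).
\]
Subtracting $knq\varrho$ leaves a quantity whose absolute value is bounded by
\[
 K_{q,z}:=q|k|\,|\tilde y|+\frac{|k\varrho|q(q-1)}{2}+q|k|C+q\bigl(|k|C+\|\Delta_1\|_\infty\bigr)+1,
\]
where the extra $1$ only ensures positivity. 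This constant is independent of $n$ and depends on $\tilde z$ only through $\tilde y=\pr{2}\tilde z$, which establishes \eqref{eq:shear}.

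There is no real obstacle here; the only point requiring care is checking that the error $R_q$ is evaluated at $f^n(z)$ and remains uniformly bounded. Since $\|\Delta_1\|_\infty<\infty$ by compactness of $\T^2$, and the deviation constant $C$ in \eqref{eq:bvd-intro} is uniform over all points and all times, this is immediate. If one prefers not to quote Lemma~\ref{lem:qstep} at a shifted point, the same conclusion follows by directly summing $\tilde x_{j+1}-\tilde x_j=k\tilde y_j+\Delta_1(f^j(z))$ for $j=n,\dots,n+q-1$ and inserting $\tilde y_j=\tilde y+j\varrho+D_j$. The resulting sum $\sum_{j=n}^{n+q-1} j\varrho=nq\varrho+\varrho q(q-1)/2$ produces exactly the drift $knq\varrho$.
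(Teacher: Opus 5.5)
Your proposal is correct and is essentially the paper's proof: you apply Lemma~\ref{lem:qstep} at $\widetilde f^{n}(\tilde z)$, expand $\tilde y_n=\tilde y+n\varrho+D_n$ via \eqref{eq:bvd-intro}, and bound what remains after subtracting $knq\varrho$ by an $n$-independent constant. Your $K_{q,z}$ differs from the paper's only in how the triangle inequality is split, and the extra $+1$ is unnecessary because the term $q|k|C>0$ already makes the constant positive.
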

\begin{proof}
Apply Lemma \ref{lem:qstep} to $\widetilde f^{n}(\tilde z)$. If $\tilde y=\pr{2}(\tilde z)$, then
\[ \pr{2}(\widetilde f^{n}(\tilde z))=\tilde y+n\varrho+D_n,
 \qquad |D_n|\le C. \]
After subtracting $knq\varrho$, the remaining expression is bounded in absolute value by
\[ K_{q,z}=q|k|\left(\left|y+\frac{\varrho(q-1)}2\right|+2C\right)
 +q\|\Delta_1\|_\infty.\]
\end{proof}

\subsection{Disjointness}
Assume, for a contradiction, that $\mathcal{C}_{r_0}\cap\mathcal{C}_{s_0}\ne\varnothing$ for some $r_0<s_0$. These circloids are distinct by Lemma \ref{lem:distinct}. Choose
\[  r_0<a^-<a<t<b<b^+<s_0 \]
and set
\[ A=[\mathcal{C}_a,\mathcal{C}_b],\qquad B=[\mathcal{C}_{a^-},\mathcal{C}_{b^+}].\]
Both $A$ and $B$ are strongly contained in the bunch $[\mathcal{C}_{r_0},\mathcal{C}_{s_0}]$, and $A\subset B$.

Choose $\eta<\min\{t-a,b-t\}$. Since $\varrho$ is irrational, there are $q\ge1$ and $m\in\Z$ such that $0<|\delta|<\eta$ with $\delta := q\varrho-m$. Take any $\widehat z\in\mathcal{C}_t$ and consider
\[\widehat w:=\widehat T_1^{-m}\widehat f^{q}(\widehat z)\in\mathcal{C}_{t+\delta}.\]
Both points belong to $A$. By the second part of Proposition \ref{prop:PS}, there is a continuum $Z\subset\widetilde A$ containing lifts $\tilde z\in\pi_\A^{-1}(\widehat z)$ and $\tilde w\in\pi_\A^{-1}(\widehat w)$. 

For some $\ell\in\Z$, we can write
\[ \tilde w=T_{(\ell,-m)}\widetilde f^{q}(\tilde z).\]
Using that $k \neq 0$,
\[ \widetilde f^{n}(\tilde w)=T_{(\ell-nkm,-m)}\widetilde f^{n+q}(\tilde z).\]
Together with \eqref{eq:shear}, this gives
\begin{equation}\label{eq:growth}
 \left|\pr{1}\bigl(\widetilde f^{n}(\tilde w)-\widetilde f^{n}(\tilde z)\bigr)-nk\delta\right|
 \le |\ell|+K_{q,z}.
\end{equation}

Choose $n_j\to\infty$ and $m_j\in\Z$ such that $\delta_j=n_j\varrho-m_j\longrightarrow0$, and define $G_j=T_{(0,-m_j)}\widetilde f^{n_j}$. It induces an annulus homeomorphism $\widehat G_j$ and
\[
 \widehat G_j(A)=[\mathcal{C}_{a+\delta_j},\mathcal{C}_{b+\delta_j}]\subset B
\]
for all sufficiently large $j$. Hence $G_j(Z)\subset\widetilde B$. On the other hand, \eqref{eq:growth} yields
\[
 \diam(G_j(Z))\ge \diam(\pr{1}G_j(Z))
 \ge |k\delta|n_j-|\ell|-K_{q,z}\longrightarrow\infty.
\]

Fix a connected component $X$ of $\widetilde{\mathcal{C}}_{r_0}\cap\widetilde{\mathcal{C}}_{s_0}$. Since $B$ is strongly contained in the bunch, the first part of Proposition \ref{prop:PS} implies
\[ \nu_h(X,G_j(Z))\longrightarrow\infty.\]
Yet each $G_j$ is a lift of an annulus homeomorphism, so Lemma \ref{lem:pullback} gives the uniform estimate $ \nu_h(X,G_j(Z))\le\lfloor\diam(\pr{1}Z)\rfloor+1,$ a contradiction. Thus the family is pairwise disjoint, and Proposition \ref{prop:construction_of_irrational_factor} proves Theorem \ref{thm:main}.

\bibliographystyle{amsplain}
\bibliography{references}

@article{Doeff1997,
  AUTHOR = {Doeff, H. E.},
  TITLE = {Rotation measures for homeomorphisms of the torus homotopic to a {D}ehn twist},
  JOURNAL = {Ergodic Theory Dynam. Systems},
  FJOURNAL = {Ergodic Theory and Dynamical Systems},
  VOLUME = {17},
  YEAR = {1997},
  NUMBER = {3},
  PAGES = {575--591},
  ISSN = {0143-3857},
  MRCLASS = {58F15 (58F25)},
  MRNUMBER = {1457193},
  DOI = {10.1017/S0143385797085015},
  URL = {https://doi.org/10.1017/S0143385797085015},
}

@article{Jager2009,
  AUTHOR = {J{\"a}ger, T.},
  TITLE = {Linearization of conservative toral homeomorphisms},
  JOURNAL = {Invent. Math.},
  FJOURNAL = {Inventiones Mathematicae},
  VOLUME = {176},
  YEAR = {2009},
  NUMBER = {3},
  PAGES = {601--616},
  ISSN = {0020-9910},
  MRCLASS = {37E40 (37C55 37D10)},
  MRNUMBER = {2505595},
  DOI = {10.1007/s00222-008-0171-5},
  URL = {https://doi.org/10.1007/s00222-008-0171-5},
}

@article{JagerTal2017,
  AUTHOR = {J{\"a}ger, T. and Tal, F.A.},
  TITLE = {Irrational rotation factors for conservative torus homeomorphisms},
  JOURNAL = {Ergodic Theory Dynam. Systems},
  FJOURNAL = {Ergodic Theory and Dynamical Systems},
  VOLUME = {37},
  YEAR = {2017},
  NUMBER = {5},
  PAGES = {1537--1546},
  ISSN = {0143-3857},
  MRCLASS = {37E40 (37C25 37E30)},
  MRNUMBER = {3672088},
  DOI = {10.1017/etds.2015.112},
  URL = {https://doi.org/10.1017/etds.2015.112},
}

@article{Kocsard2021,
  AUTHOR = {Kocsard, A.},
  TITLE = {Periodic point free homeomorphisms and irrational rotation factors},
  JOURNAL = {Ergodic Theory Dynam. Systems},
  FJOURNAL = {Ergodic Theory and Dynamical Systems},
  VOLUME = {41},
  YEAR = {2021},
  NUMBER = {10},
  PAGES = {2946--2982},
  ISSN = {0143-3857},
  MRCLASS = {37E30 (37E45)},
  MRNUMBER = {4326553},
  DOI = {10.1017/etds.2020.88},
  URL = {https://doi.org/10.1017/etds.2020.88},
}

@article{PasseggiSambarino2020,
  AUTHOR = {Passeggi, A. and Sambarino, M.},
  TITLE = {Deviations in the {F}ranks--{M}isiurewicz conjecture},
  JOURNAL = {Ergodic Theory Dynam. Systems},
  FJOURNAL = {Ergodic Theory and Dynamical Systems},
  VOLUME = {40},
  YEAR = {2020},
  NUMBER = {9},
  PAGES = {2533--2540},
  ISSN = {0143-3857},
  MRCLASS = {37E45},
  MRNUMBER = {4137255},
  DOI = {10.1017/etds.2019.8},
  URL = {https://doi.org/10.1017/etds.2019.8},
}

@book{GH55,
  author    = {Gottschalk, W. H. and Hedlund, G. A.},
  title     = {Topological Dynamics},
  series    = {American Mathematical Society Colloquium Publications},
  volume    = {36},
  publisher = {American Mathematical Society},
  address   = {Providence, RI},
  year      = {1955}
}

@article{KPR18,
  author        = {Kocsard, A. and Pereira-Rodrigues, F.},
  title         = {Rotational deviations and invariant pseudo-foliations for periodic point free torus homeomorphisms},
  journal       = {Mathematische Zeitschrift},
  volume        = {290},
  number        = {3--4},
  pages         = {1223--1247},
  year          = {2018},
  doi           = {10.1007/s00209-018-2060-y},
  eprint        = {1704.04788},
  archivePrefix = {arXiv},
  primaryClass  = {math.DS}
}

@article{MZ89,
  author    = {Misiurewicz, M. and Ziemian, K.},
  title     = {Rotation sets for maps of tori},
  journal   = {Journal of the London Mathematical Society},
  volume    = {40},
  number    = {3},
  pages     = {490--506},
  year      = {1989},
  doi       = {10.1112/jlms/s2-40.3.490}
}

\end{document}